\begin{document}
\begin{center}
\vskip 1cm{\LARGE\bf Restricting Dyck Paths and 312-avoiding Permutations} \vskip 1cm \large {Elena Barcucci, Antonio Bernini,\\

 Stefano Bilotta, Renzo Pinzani}\\

\vspace{.2cm}

Dipartimento di Matematica e Informatica ``Ulisse Dini''\\
Universit\`a di Firenze\\

Viale G. B. Morgagni 65,
50134 Firenze, Italy\\

\vspace{.2cm}

\end{center}

\theoremstyle{plain}
\newtheorem{theorem}{Theorem}
\newtheorem{corollary}[theorem]{Corollary}
\newtheorem{lemma}[theorem]{Lemma}
\newtheorem{proposition}[theorem]{Proposition}

\theoremstyle{definition}
\newtheorem{defi}[theorem]{Definition}
\newtheorem{example}[theorem]{Example}
\newtheorem{conjecture}[theorem]{Conjecture}

\theoremstyle{remark}
\newtheorem{remark}[theorem]{Remark}

\begin{abstract}
Dyck paths having height at most $h$ and without valleys at height $h-1$ are combinatorially interpreted by means of 312-avoding permutations with some restrictions on their \emph{left-to-right maxima}. The results are obtained by analyzing a restriction of a well-known bijection between the sets of Dyck paths and 312-avoding permutations.  We also provide a recursive formula enumerating these two structures using ECO method and the theory of production matrices.

As a further result we obtain a family of combinatorial identities involving Catalan numbers. 
\end{abstract}

\noindent
\emph{AMS 2020 Mathematics Subject Classifications:} Primary 05A19; Secondary 05A05, 05A15.

\medskip
\noindent
\emph{Keywords:} Dyck path, avoiding permutation, Catalan number.

\section{Introduction}

Dyck paths have been widely used in several combinatorial applications. Here, we only recall their involvement in theory of codes \cite{BBP2,bi}, cryptography \cite{SAB} and partial ordered structures \cite{BCFS}.
Dyck paths enumeration has also received much attention in recent decades. An interesting paper dealing with this matter is the one by E. Deutsch \cite{D} where the author enumerates Dyck paths according to various parameters. 

A subclass of these
paths has been considered thanks to the simple
behavior of the recursive relations describing them and the rational nature of the related generating function. 
More precisely, 
the generating function associated to Dyck paths is algebraic and it is rational when the paths are bounded \cite{BM,BMP}, for example with respect to the height. Kallipoliti et al. \cite{KST} consider Dyck paths having height less or equal to a precise value $k$. Moreover, in the same paper a further restriction is considered:  the authors analyze some characteristics of Dyck paths avoiding valleys at specified height.

In our work we consider Dyck paths having height equal or less than $h$ and having no valleys at height $h-1$. 
We obtain an interesting relation with a subclass of $312$-avoiding permutations (actually, we obtain a bijection) having some constraints on the \emph{left-to-right} maxima.  

The paper structure is the following. In Section \ref{Prelim} some preliminaries on Dyck paths and pattern avoiding permutations are introduced. Here we also recall a well-known bijection between the sets of Dyck paths and $312$-avoiding permutations, we are going to largely use in the whole paper. 
Sections \ref{Section3} and \ref{Section4} are devoted to the generation of the considered Dyck paths (having height equal or less than $h$ and having no valleys at height $h-1$) and the corresponding $312$-avoiding permutations with some restriction on their left to right maxima. The enumerative results are presented in Section \ref{enum}. they provide the generating functions for the above mentioned classes, and a recurrence relation for their enumeration according to their size.

Finally, we conclude the paper proposing some further developments on the present topics.

\section{Preliminaries}\label{Prelim}
A Dyck path is a lattice path in the discrete plane $\mathbb Z^2$ from $(0,0)$ to $(2n,0)$ with up and down steps in $\{(1,1),(1,-1)\}$, never crossing the $x$-axis. The number of up steps in any prefix of a Dyck path is greater or equal to the number of down steps and the total number of steps (the \emph{length} of the path) is $2n$. We denote the set of Dyck paths having length $2n$ (or equivalently semilength $n$) by $\mathcal D_n$.
A Dyck path can be codified by a string over the alphabet $\{U,D\}$, where $U$ and $D$ replace the up and down steps, respectively. The empty Dyck path is denoted by $\varepsilon$.

The height of a Dyck path $P$ is the maximum ordinate reached by one of its steps.
A \emph{valley} of $P$ is an occurrence of the substring $DU$ while a \emph{peak} is an occurrence of the substring $UD$. The height of a valley (peak) is the ordinate reached by $D$ ($U$). 

We denote by $\mathcal D_n^{(h,k)}$ the set of Dyck paths having semilength $n$ and height at most $h$, and avoiding $k-1$ consecutive valleys at height $h-1$. The set of Dyck paths having semilength $n$ with height at most $h$ (without restriction on the number of valleys) is denoted by $\mathcal D_n^{(h)}$. Moreover, $\mathcal D^{(h)}=\displaystyle\sum_{n\geq 0} \mathcal D_n^{(h)}$ and $\mathcal D^{(h,k)}=\displaystyle\sum_{n\geq 0} \mathcal D_n^{(h,k)}$.

The cardinalities of $\mathcal D_n^{(h,k)}$ and $\mathcal D_n^{(h)}$ are indicated by $D_n^{(h,k)}$ and $D_n^{(h)}$, respectively. Finally,
the set $\mathcal D_n$ of unrestricted Dyck paths having semilength $n\geq 0$ is enumerated by the $n$-Catalan number 
$$C_n=\frac{1}{n+1}\binom{2n}{n}\ .$$
  
When $k=2$, the set $\mathcal D_n^{(h,2)}$ represents the set of Dyck paths avoiding valleys at height $h-1$. In the present work we describe a combinatorial interpretation of $\mathcal D_n^{(h,2)}$ in terms of restricted
permutations.

\bigskip
In our context, the above mentioned permutations are related to the notion of pattern avoidance which can be generally described as the absence of a substructure inside a larger structure. In particular, an occurrence of a pattern $\sigma$ in a permutation $\pi=\pi_1 \pi_2 \dots \pi_n$ of length $n$ is a sub-sequence (not necessarily constituted by consecutive entries) of $\pi$ whose entries appear in the same relative order as those in $\sigma$. Otherwise, we say that $\pi$ avoids the pattern $\sigma$, or that $\sigma$ is a forbidden pattern for $\pi$. For example, $\pi=352164$ contains two occurrences of $\sigma=312$ in the sub-sequences $514$ and $524$, while $\pi=34251$ avoids the pattern $\sigma$. The set $\mathcal S_n(312)$ denotes the set of 312-avoiding permutations of length $n$ which are enumerated by the $n$-Catalan number. 

We are going to briefly recall a well-known bijection $\varphi$, useful in the rest of the paper, between the classes $\mathcal D_n$ and $\mathcal S_n(312)$ (see for example \cite{D,K}). Fix a Dyck path $P$
and label its up steps by enumerating them from left to right (so that the $\ell$-th up
step is labelled $\ell$). Next assign to each down step the same label of the up step it
corresponds to. Now consider the permutation  whose entries are constituted by the
labels of the down steps read from left to right. Such a permutation $\pi=\varphi(P)$ is easily seen
to be 312-avoiding. As far as the inverse map $\varphi^{-1}$ is concerned, once fixed a 312-avoiding permutation $\pi=\pi_1\pi_2 \ldots \pi_n$ we can consider its factorization in terms of descending sub-sequences whose first elements coincide with the \emph{left-to-right maxima} of $\pi$. A left-to-right maximum (l.r.M for short) is an element $\pi_i$ which is greater 
than all the elements to its left, i.e., greater than all  $\pi_j$ with $j<i$. Denoting $\pi_{i_1}, \pi_{i_2},\ldots, \pi_{i_\ell}$ the left-to-right maxima of $\pi$, the corresponding Dyck path $P=\varphi^{-1}(\pi)$ is obtained as follows: 
\begin{itemize}
    \item write as many $U$'s as $\pi_{i_1} (=\pi_1)$ followed by as many $D$'s as the cardinality of the first descending sub-sequence headed by $\pi_{i_1}$;
    \item for each $j=2,\ldots,\ell$, add as many $U$'s as $\pi_{i_j} - \pi_{i_{j-1}}$ followed by as many $D$'s as the cardinality of the sub-sequence headed by $\pi_{i_j}$.
\end{itemize}

Two easy properties of l.r.M of $\pi\in S_n(312)$, and the corresponding steps in $P=\varphi^{-1}(\pi)$ are summarized in the following: 

\begin{proposition}\label{property}
Let $P$ be a Dyck path in $\mathcal D_n$ and $\pi=\varphi(P)=\pi_1 \ldots \pi_n$ be the associated permutation in $S_n(312)$.
Each entry $\pi_{i_j}$ corresponding to the first down step of a sub-sequence of consecutive down steps in $P$ is a left to right maximum. Moreover, 
$\pi_{i_j} - i_j$ is the height reached by the descending step in $P$ corresponding to $\pi_{i_j}$.

\end{proposition}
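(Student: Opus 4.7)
The plan is to unwind the definition of the bijection $\varphi$ and track labels carefully. Recall that under $\varphi$ the $\ell$-th up step of $P$ (reading left to right) receives the label $\ell$, and each down step inherits the label of the up step with which it is paired under the standard matching of $U$'s and $D$'s inside a Dyck word; the permutation $\pi=\varphi(P)$ is obtained by reading these labels on the $D$-steps from left to right.

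For the first claim, I would fix a maximal run of consecutive down steps in $P$ and consider its leftmost step, which is the $i_j$-th down step overall. Immediately to its left sits either the starting vertex of $P$ or a peak, so this $D$ is matched with the most recent (i.e., the newest) still-unmatched up step. Letting $m$ denote the total number of up steps appearing strictly before this down step, the matching yields $\pi_{i_j}=m$. On the other hand, every earlier down step is matched with some up step whose label belongs to $\{1,\dots,m-1\}$, so $\pi_k\le m-1<\pi_{i_j}$ for all $k<i_j$. This shows $\pi_{i_j}$ is a left-to-right maximum of $\pi$.

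For the height statement, I would simply count steps: just before the $i_j$-th down step is executed, exactly $m$ up steps and $i_j-1$ down steps have been performed, so the walker sits at ordinate $m-(i_j-1)$. Taking this down step lowers the ordinate to
\[
m-i_j=\pi_{i_j}-i_j,
\]
which, by the convention introduced together with the definition of valley/peak heights (the height of a $D$-step is the ordinate it reaches at its end), is precisely the height reached by that descending step.

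The argument is essentially bookkeeping inside $\varphi$, so there is no real obstacle; the only point requiring care is to match the paper's convention for the height of a $D$-step with the correct endpoint of the step, and to justify that the leftmost $D$ of a run necessarily matches the most recently seen $U$, which follows from the standard parenthesis-matching description of the pairing.
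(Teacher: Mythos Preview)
The paper does not actually supply a proof of this proposition; it is presented as summarizing ``two easy properties'' and left to the reader. Your argument is correct and fills in precisely the bookkeeping the paper omits. One small point you could make more explicit: when you assert that every earlier down step carries a label in $\{1,\dots,m-1\}$, this follows because the $m$-th up step is the step \emph{immediately} preceding the $i_j$-th down step, so the $i_j-1$ earlier $D$'s all occur strictly before the $m$-th $U$ and hence must be matched with $U$'s among the first $m-1$.
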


\section{A generating algorithm}\label{Section3}
The set $\mathcal D^{(h,2)}$ can be exhaustively generated by means an ECO operator \cite{BDPP1} which allows to construct all the paths of a certain length $n+1$ (the size of the combinatorial objects) staring from the ones of size $n$.

To this aim, consider a Dyck path $P \in \mathcal D_n^{(h,2)}$ which, obviously, starts with $t \leq h$ up steps $U$. We mark these steps factorizing the path $P$ as $P=U_1 U_2 \cdots U_t D P'$, where $P'$ is a suitable Dyck suffix of length $n-t-1$.
The idea is to consider some \emph{sites} in $P \in \mathcal D_{n}^{(h,2)}$ where an insertion of the factor $\mathbf{UD}$ is allowed in order to obtain paths in $\mathcal D_{n+1}^{(h,2)}$ from $P$ (so that the sites are called \emph{active sites}).    

Thus, we define an operator $\vartheta$ for the class $\mathcal D_n^{(h,2)}$ as follows:
\begin{itemize}
\item[-] if $P=U_1 U_2 \cdots U_{t-1}U_t D P' \in \mathcal D_n^{(h,2)}$, with $t < h $, then 
$$
\begin{array}{ll}
\vartheta(P)=\{&\hspace{-.35cm}\mathbf{UD} U_1 U_2 \cdots U_{t-1}U_t D P',\\
&\hspace{-.35cm}U_1\mathbf{UD} U_2 \cdots U_{t-1}U_t D P',\\
& \cdots\\
&\hspace{-.35cm}U_1 U_2 \cdots U_{t-1} \mathbf{UD} U_t D  P',\\
&\hspace{-.35cm}U_1 U_2 \cdots U_{t-1} U_t \mathbf{UD} D P'\ \}\ ;
\end{array}
$$
\item[-] if $P=U_1 U_2 \cdots U_{t-1}U_t D P' \in \mathcal D_n^{(h,2)}$, with $t=h$, then  
$$
\begin{array}{ll}
\vartheta(P)=\{&\hspace{-.35cm}\mathbf{UD} U_1 U_2 \cdots U_{t-1}U_t D P',\\
&\hspace{-.35cm}U_1\mathbf{UD} U_2 \cdots U_{t-1} U_t D P',\\
& \cdots\\
&\hspace{-.35cm}U_1 U_2 \cdots \mathbf{UD} U_{t-1} U_t D  P'\}\ .
\end{array}
$$
\end{itemize}

We note that the insertion of $\mathbf{UD}$ may create a valley in the paths of $\vartheta(P)$. In particular,
\begin{itemize}
    \item the insertion of $\mathbf{UD}$ before the step $U_j$, with $j=1,2,\ldots,t-1$, gives the occurrence of the valley $\mathbf{D}U_j$ having height $j-1<h-1$ in any case;
    \item the insertion of $\mathbf{UD}$ before the step $U_t$ in the case $t<h$ gives the occurrence of the valley $\mathbf{D}U_t$ having height equal to $t-1<h-1$;
    \item the insertion of $\mathbf{UD}$ after the step $U_t$ in the case $t<h$ does note give the occurrence of a valley (since the next step is again a $D$ step).
\end{itemize}

\noindent
In other words, the valley possibly generated by the insertion of $\mathbf{UD}$ has height less than $h-1$, therefore we have:
\begin{proposition}
    If $x \in \vartheta(P)$, with $P \in \mathcal D_n^{(h,2)}$,  then $x \in \mathcal D_{n+1}^{(h,2)}$.
\end{proposition}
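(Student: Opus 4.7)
The plan is to verify that every $x \in \vartheta(P)$ simultaneously satisfies the three defining properties of $\mathcal D_{n+1}^{(h,2)}$: being a Dyck path of semilength $n+1$, having height at most $h$, and avoiding valleys at height $h-1$.

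The first property is essentially immediate: $\vartheta$ inserts the balanced factor $\mathbf{UD}$ into $P$, and since every prefix of $\mathbf{UD}$ contains at least as many $U$'s as $D$'s, the insertion preserves the prefix condition characterizing a Dyck word. The length grows by two, so the semilength becomes $n+1$. This step I would dispatch in a single sentence in the final write-up.

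For the height bound I would perform a short case analysis on the insertion site. When $\mathbf{UD}$ is inserted before $U_j$ with $1 \leq j \leq t-1$, the new $U$ reaches ordinate $j \leq t-1 \leq h-1$; when $t<h$ and the insertion is before $U_t$, the new $U$ reaches ordinate $t \leq h-1$; when $t<h$ and the insertion is after $U_t$, the new $U$ reaches ordinate $t+1 \leq h$. Outside the inserted factor the ordinate of every step of $P$ is unchanged, so the maximum height of $x$ is bounded by $h$. This is precisely where the exclusion, in the $t=h$ case, of the last two insertion sites matters: inserting after $U_t$ would produce height $h+1$, and inserting before $U_t$ would generate the forbidden valley at height $h-1$.

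For the valley restriction I would observe that, since the insertion is confined to the initial ascending run, the suffix $P'$ is left unaltered, and every pre-existing valley of $P$ (all of which lie in $P'$) survives at the same height, which by hypothesis is strictly less than $h-1$. The only candidate for a new valley occurs at the insertion site: inserting $\mathbf{UD}$ before $U_j$ creates the valley $\mathbf{D}U_j$ at height $j-1$, and in every admissible branch one has $j-1 \leq t-2 \leq h-2$ when $j \leq t-1$ and $j-1 = t-1 \leq h-2$ when $j = t$ (which requires $t<h$); inserting $\mathbf{UD}$ after $U_t$ creates no valley at all, since the inserted $\mathbf{D}$ is followed by the $D$ of $P$ that originally came after $U_t$. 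Hence $x$ has no valley at height $h-1$. I do not anticipate any substantial obstacle: the argument is essentially a bookkeeping of the three bulleted observations already recorded in the excerpt, and the only point requiring genuine care is the boundary case $t=h$, where one must be sure that the two sites excluded by the definition of $\vartheta$ are exactly the ones that would otherwise violate the height or the valley constraint.
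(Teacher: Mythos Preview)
Your proof is correct and follows essentially the same approach as the paper: the paper's argument is precisely the three bulleted observations preceding the proposition, which show that any valley created by the insertion of $\mathbf{UD}$ has height strictly less than $h-1$. Your version is more thorough in that you also explicitly verify the Dyck property and the height bound (which the paper leaves implicit), and you make explicit why the two excluded sites in the $t=h$ case are exactly the dangerous ones; but the core idea is identical.
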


\bigskip
In the spirit of ECO method, we have to prove the following proposition.
\begin{proposition}
    The operator $\vartheta$ is an ECO operator. 
\end{proposition}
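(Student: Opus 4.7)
The plan is to prove that $\vartheta$ is an ECO operator by exhibiting an explicit inverse $\psi:\mathcal D_{n+1}^{(h,2)}\to \mathcal D_n^{(h,2)}$, from which disjointness of the sets $\vartheta(P)$ and their collective coverage of $\mathcal D_{n+1}^{(h,2)}$ both follow. The previous proposition already guarantees $\vartheta(P)\subseteq \mathcal D_{n+1}^{(h,2)}$, so only the partition property remains.

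The key observation is that, in every image $Q\in\vartheta(P)$, the inserted factor $\mathbf{UD}$ is the \emph{first peak} of $Q$: in each rule defining $\vartheta$, the portion strictly preceding the insertion point consists only of $U$ steps, so no $UD$ can occur earlier. This suggests the following inverse. Given $Q\in\mathcal D_{n+1}^{(h,2)}$ whose first peak has height $h'$, write $Q=U^{h'}DQ''$ and set $\psi(Q):=U^{h'-1}Q''$.

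I would then verify three facts. First, $\psi(Q)\in\mathcal D_n^{(h,2)}$: the semilength drops by one, the height cannot increase, and the only place where a new valley could arise is the junction between $U^{h'-1}$ and $Q''$, which is either $UU$ or a peak $UD$, so no new valley (in particular none at height $h-1$) is created. Second, $Q\in\vartheta(\psi(Q))$, by cases on the first step of $Q''$: if $Q''$ begins with $U$, then the forbidden-valley condition on $Q$ forces $h'\leq h-1$, the initial run of $\psi(Q)$ is at least $h'$, and inserting $\mathbf{UD}$ before $U_{h'}$ in $\psi(Q)$ reproduces $Q$ (this insertion is legal in both branches of the definition); if $Q''$ starts with $D$ (or is empty, forcing $h'=1$), then $\psi(Q)$ has initial run $t=h'-1<h$ and inserting $\mathbf{UD}$ after $U_t$ reproduces $Q$, this being exactly the extra insertion permitted when $t<h$. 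Third, uniqueness: the first-peak heights of the images of a fixed $P$ under distinct insertions are pairwise distinct (they are $1,\ldots,t,t+1$ when $t<h$ and $1,\ldots,t-1$ when $t=h$), and different parents are distinguished because $\psi$ recovers $P$ uniquely from $Q$.

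The main delicate point is the boundary case where the first peak of $Q$ reaches the maximal height $h$: the avoidance of valleys at height $h-1$ in $Q$ forces $Q''$ to begin with $D$ (or to be empty when $h=1$), so that the corresponding rebuilding necessarily uses the \emph{insert after $U_t$} rule at $t=h-1<h$, thereby avoiding the insertions at positions involving $U_h$ which are precisely those excluded in the $t=h$ branch of $\vartheta$. Verifying this compatibility between the valley restriction on $\mathcal D^{(h,2)}$ and the list of admissible insertions is the technical heart of the argument.
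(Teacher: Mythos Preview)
Your proposal is correct and follows essentially the same approach as the paper: both arguments hinge on the observation that the inserted factor $\mathbf{UD}$ is precisely the first peak of any image, so removing that first peak gives the unique preimage. Your treatment is in fact more careful than the paper's, which glosses over why the resulting path still lies in $\mathcal D_n^{(h,2)}$ and why the reconstructed insertion is always among those listed in the definition of $\vartheta$; your case analysis on the first step of $Q''$ and your handling of the boundary situation $h'=h$ fill exactly these gaps.
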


\begin{proof}
The proof consists in the following steps:
\begin{itemize}
\item[i)] If $x,y \in \mathcal D_n^{(h,2)}$ with $x \neq y$, then $\vartheta(x) \cap \vartheta(y)=\emptyset$.

\item[ii)] If $x \in \mathcal D_{n+1}^{(h,2)}$ then $\exists\ y \in \mathcal D_n^{(h,2)}$ such that $x \in \vartheta(y)$.

\end{itemize}
For case i), we suppose that exists a path $P$ such that $P \in \vartheta (x)$ and $P \in \vartheta(y)$, with $x \neq y$.
From the description of the operator $\vartheta$ it is easy to realize that the first peak of $P$ is precisely generated by the insertion of the factor $\mathbf{UD}$. By removing such a peak  from $P$, we obtain an unique path. Thus, we would have $x=y$, against the hypothesis.      

\bigskip
\noindent
For case ii), if $x\in D_{n+1}^{(h,2)}$, then $x=U^j\mathbf{UD}T'$, with $j=0,1,\ldots, h-1$, where $T'$ is a suitable Dyck suffix of suitable length.
Then, the path $y=U^jT'$ starts with at most $h$ up steps $U$ so that $x\in 
D_n^{(h,2)}$. Clearly, it is $x\in\vartheta(y)$ since $y$ is obtained by the insertion of $\mathbf{UD}$ in $x$.

\end{proof}

A generating algorithm can be naturally described by means of the concept of \emph{succession rule}. Such a concept was introduced by Chung et al. \cite{CGHK} to study reduced Baxter permutations. Recently this technique has been successfully applied to other
combinatorial objects \cite{Bilotta2013157,Bilotta201910} and it has been recognized as an extremely useful tool for the ECO method \cite{BDPP1}. In all these cases there is a common approach
to the examined enumeration problem: a generating tree is associated to certain
combinatorial class, according to some enumerative parameters, in such
a way that the number of nodes appearing on level $n$ of the tree gives the
number of $n$-sized objects in the class. 

A succession rule is a formal system constituted by an \emph{axiom} $(a)$ and some \emph{productions} (possibly only one) having the form

$$
(k)\rightsquigarrow \left(e_1(k)\right)\left(e_2(k)\right)\ldots
\left(e_k(k)\right)\ ,
$$

\noindent
so that a succession rule $\Omega$ is often denoted by

$$
\Omega:
\left\{
\begin{array}{l}
     (a)\\
     \\
     (k)\rightsquigarrow \left(e_1(k)\right)\left(e_2(k)\right)\ldots
\left(e_k(k)\right) \ .\\
\end{array}
\right.
$$

\medskip
\noindent
The symbols $(a)$,$(k)$ and $e_i(k)$ are called \emph{labels} (their values are positive integers), and play a crucial role when the the succession rule $\Omega$ is represented by a \emph{generating tree}. This is a rooted tree whose nodes are the labels of $\Omega$. More precisely, the root is labelled with $(a)$ and each node having label $(k)$ has $k$ children having labels
$e_1(k),e_2(k),\ldots,e_k(k)$, according to the productions in $\Omega$.

In our case the generating algorithm for $\mathcal{D}^{(h,2)}$ is performed by the operator $\vartheta$ and from its definition is easy to realize that:
\begin{itemize}
    \item the empty path $\varepsilon$ can be labelled with the axiom $(1)$ having production $(1)\rightsquigarrow (2)$: the path $\varepsilon$ generates the path $UD$, having in turns label $(2)$;
    \item every other path $P$ can have label $(2)$, $(3)$, \ldots or $(h)$ depending on the number $t$ of its starting up steps $U$. More precisely, if $1 \leq t \leq h-1$ then $P$ is labelled $(t+1)$. Otherwise, if $t=h$, then $P$ is labelled $(h-1)$. 
\end{itemize}

In order to write the productions of the labels $(k)$ of $P$, with $k=2,3,\ldots,h$ we observe that:
\begin{itemize}
    \item if $k<h$ then the $k$ paths in $\vartheta(P)$ start, respectively, with $1,2,\ldots\ \mbox{or}\ k$ up steps, so that, in turns, they are labelled $(2),(3),\ldots (k+1)$. Then we can write the production $$(k)\leadsto(2)(3)\cdots(k)(k+1)
    , \ 2\leq k <h.$$
    \item if $k=h$ then the $k$ paths in $\vartheta(P)$ start, respectively, with $1,2,\ldots\ \mbox{or}\ h$ up steps. Since the path having $h$ starting up steps is labelled $(h-1)$, then we can write the production $$(h)\leadsto(2)(3)\cdots(h-1)^2(h).$$
    The two paths having label $(h-1)$ are precisely the one starting with $h$ up steps and the one starting with $h-2$ up steps.
\end{itemize}

Finally, the generating algorithm for $\mathcal D^{(h,2)}$ can be described by the succession rule (for $h\geq3$) as follows:

\begin{equation}\label{rule_h2}
\Omega_h:
\left\{
\begin{array}{ccl}
(1)&&\\
(1)&\leadsto&(2)\\
(k)&\leadsto&(2)(3)\cdots(k)(k+1), \quad\quad 2\leq k<h\\
(h)&\leadsto&(2)(3)\cdots(h-1)^2(h)
\end{array}
\right.
\end{equation}

\section{The bjection with a subset of 312-avoiding permutations}\label{Section4}

Let $\mathcal S_n^{(h)}(312)\subseteq S_n(312)$ be the subset of permutations $\pi\in S_n(312)$ such that $\pi_{i_j} - i_j \leq h-1$, for each l.r.M. $\pi_{i_j}$ of $\pi$.
The reader can easily check that the restriction
$\varphi_{\big|D_n^{(h)}}$
of $\varphi$ to the set $D_n^{(h)}$ is a bijection between $D_n^{(h)}$ and 
$S_n^{(h)}(312)$ (using Proposition \ref{property}).

We now consider the paths in $D_n^{(h,2)}$ and characterize the 
corresponding permutations via the restriction of $\varphi$ to this set. 
The following proposition holds.

\begin{proposition}
Let $P$ be a Dyck path in $\mathcal D_n$. Then, $P \in \mathcal D_n^{(h,2)}$ if and only if in the corresponding permutation $\pi=\varphi(P)$ there is no left-to-right maximum $\pi_{i_j}$ such that 
\begin{enumerate}
    \item $\pi_{i_j} - i_j =h-1$ and
    \item $\pi_{i_{j+1}}=\pi_{i_j}+1$.
\end{enumerate}
\end{proposition}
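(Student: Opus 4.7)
My plan is to translate the two defining properties of $\mathcal D_n^{(h,2)}$ — height at most $h$ and no valley at height $h-1$ — into arithmetic conditions on the left-to-right maxima of $\pi$, using Proposition~\ref{property} together with the explicit description of $\varphi^{-1}$ recalled in Section~\ref{Prelim}.

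Denote the l.r.M.\ of $\pi$ by $\pi_{i_1},\ldots,\pi_{i_\ell}$, with conventions $\pi_{i_0}=0$ and $i_{\ell+1}=n+1$. Walking through $\varphi^{-1}$, the path $P=\varphi^{-1}(\pi)$ decomposes, for $j=1,\ldots,\ell$, into an ascending run of $\pi_{i_j}-\pi_{i_{j-1}}$ up-steps followed by a descending run of $i_{j+1}-i_j$ down-steps. By Proposition~\ref{property}, the first down-step of the $j$-th descending run ends at height $\pi_{i_j}-i_j$; hence the $j$-th peak sits at height $\pi_{i_j}-i_j+1$ and, for $j<\ell$, the valley following the $j$-th descending run sits at height
\[
(\pi_{i_j}-i_j+1)-(i_{j+1}-i_j)=\pi_{i_j}+1-i_{j+1}.
\]

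With these formulas in hand, membership $P\in\mathcal D_n^{(h,2)}$ amounts to (i) $\pi_{i_j}-i_j\le h-1$ for every $j$ (height bound) together with (ii) $\pi_{i_j}+1-i_{j+1}\ne h-1$ for every $j<\ell$ (no valley at $h-1$). I would argue both implications by contraposition. A valley of $P$ at height $h-1$ can arise only when the preceding descending run starts at height $\ge h$; combined with~(i) it must start at height exactly $h$, yielding condition~(1) $\pi_{i_j}-i_j=h-1$, and have length one, i.e.\ $i_{j+1}-i_j=1$. So the ``$\Rightarrow$'' direction reads off (1) and~(2) from such a $j$, while the ``$\Leftarrow$'' direction shows that (1) and~(2) together force a valley at $h-1$, contradicting $P\in\mathcal D_n^{(h,2)}$.

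The main obstacle is the passage between the positional statement ``the $j$-th descending run has length one'' ($i_{j+1}=i_j+1$) and the value statement~(2), $\pi_{i_{j+1}}=\pi_{i_j}+1$. One direction is immediate: if $i_{j+1}=i_j+1$ then, under~(i), $\pi_{i_{j+1}}\le i_{j+1}+h-1=\pi_{i_j}+1$ while also $\pi_{i_{j+1}}>\pi_{i_j}$, giving $\pi_{i_{j+1}}=\pi_{i_j}+1$. The converse is more delicate and will require invoking $312$-avoidance — entries between consecutive l.r.M.\ form a strictly decreasing sequence of values smaller than the preceding maximum — to rule out entries squeezing between $\pi_{i_j}$ and $\pi_{i_{j+1}}$ in the permutation and to match the value equation back to the length-one positional condition that actually produces the forbidden valley.
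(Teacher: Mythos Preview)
Your overall strategy—reading off peak and valley heights from the l.r.M.\ data via Proposition~\ref{property} and arguing both directions by contraposition—is exactly what the paper does. You have also put your finger on the one genuinely nontrivial point: bridging the \emph{positional} condition $i_{j+1}=i_j+1$ (which is what actually produces a valley at height $h-1$) and the \emph{value} condition~(2), $\pi_{i_{j+1}}=\pi_{i_j}+1$, stated in the proposition.

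The problem is that the converse you intend to prove is false. Take $\pi=2134\in S_4(312)$ and $h=2$. The l.r.M.\ sit at positions $i_1=1$, $i_2=3$, $i_3=4$ with values $2,3,4$; for $j=1$ one has $\pi_{i_1}-i_1=1=h-1$ and $\pi_{i_2}=\pi_{i_1}+1$, so (1) and (2) hold, yet $i_2-i_1=2$. Correspondingly $P=\varphi^{-1}(\pi)=UUDD\,UD\,UD$ has height $2$ and its only valleys are at height $0$, so $P\in\mathcal D_4^{(2,2)}$. Thus conditions (1)--(2), read literally with $\pi_{i_{j+1}}$ the \emph{next l.r.M.}, do not force a valley at height $h-1$, and no amount of $312$-avoidance will close this gap: the entries between $\pi_{i_j}$ and $\pi_{i_{j+1}}$ are indeed all smaller than $\pi_{i_j}$, but nothing prevents there being some.

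If you look at the paper's own proof of the ``$\Rightarrow$'' direction, you will see that it silently reads condition~(2) positionally: it assumes a l.r.M.\ $\pi_i$ with $\pi_{i+1}=\pi_i+1$ at the \emph{adjacent position} $i+1$, then checks that $\pi_{i+1}$ is also a l.r.M. Under that reading the obstacle evaporates—(2) \emph{is} the length-one condition—and both your argument and the paper's go through. So the honest resolution is not to try to prove the converse you flagged, but to note that the intended condition~(2) is $\pi_{i_j+1}=\pi_{i_j}+1$ (equivalently, the next l.r.M.\ occurs at position $i_j+1$); your one-line derivation under~(i) already shows this is equivalent, in the presence of~(1), to $\pi_{i_{j+1}}=\pi_{i_j}+1$ \emph{together with} $\pi_{i_{j+1}}-i_{j+1}=h-1$.
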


\begin{proof}
Suppose that $\pi=\varphi(P)$ has no a left-to-right maximum $\pi_{i_j}$ such that $\pi_{i_j} - i_j =h-1$ and $\pi_{i_{j+1}}=\pi_{i_j}+1$.
Let $P=\varphi^{-1}(\pi)$ be the corresponding path. We have to prove that $P\in \mathcal D_n^{(h,2}$.
\begin{itemize}
\item If $P$ has height less than $h$, then, surely, $P\in \mathcal D_n^{(h,2)}$ and the proof is completed.

\item Suppose that $P$ has height equal to $h$ and suppose, ad absurdum, that $P\notin \mathcal D_n^{(h,2)}$. Therefore, there exists a valley having height $h-1$. Thus, $P$ can be written as $P'U_iD_iU_{i+1}D_{i+1}P''$,
where $P'$ and $P''$ are, respectively, a Dyck prefix and a Dyck suffix having height $h-1$.
Considering the permutation $\pi=\varphi(P)=\pi_1 \ldots \pi_i \pi_{i+1} \ldots \pi_n$ (where we highlighted the entries $\pi_i$ and $\pi_{i+1}$ corresponding to the steps $D_i$ and $D_{i+1}$), thanks to Proposition \ref{property}, it is possible to observe that the elements $\pi_i$ and $\pi_{i+1}$ associated to $U_i$ and $U_{i+1}$, respectively, are l.r.M. in $\pi$. Again from Proposition \ref{property}, we have $\pi_i-i=h-1$ and $\pi_{i+1}-(i+1)=h-1$ and, by substitution, it is $\pi_{i+1}=\pi_i +1$, against the hypothesis. Thus, $P\in \mathcal D_n{(h,2)}$.

\end{itemize}

On the other side, suppose that $P \in \mathcal D_n^{(h,2)}$ and suppose, ad absurdum, that $\pi=\varphi(P)=\pi_1 \ldots \pi_i \pi_{i+1} \ldots \pi_n \in \mathcal S_n^{(h)}(312)$ has a left to right maximum $\pi_i$ with  $\pi_{i+1}=\pi_i+1$ and $\pi_i - i = h-1$. Then, it is $\pi=\pi_1 \ldots \pi_i (\pi_i+1) \ldots \pi_n$. Since $\pi_i < \pi_{i+1}$ and $\pi$ is a 312-avoiding permutation, then there is not $\pi_l > \pi_i$ with $l<i$. Thus, both $\pi_i$ and $\pi_{i+1}$ are l.r.M. in $\pi$. From Proposition \ref{property}, the quantities $\pi_i - i$ and $\pi_{i+1} - (i+1)$ are the heights reached by the corresponding descending steps in $P$. Moreover, from the two hypotheses $\pi_i - i =h-1$ and $\pi_{i+1}=\pi_i+1$, we deduce $\pi_{i+1} - (i+1)=\pi_i+1 - (i+1)= h-1$. Thus, $P=\varphi^{-1}(\pi)$ can be factorized as $P=P'U_iD_iU_{i+1}D_{i+1}P''$ showing that $P$ admits a valley having height $h-1$, against the hypothesis $P\in \mathcal D_n^{(h,2)}$.   

\end{proof}
\bigskip

The permutations corresponding to the paths in $\mathcal D_n^{(h,2)}$ are denoted by $\mathcal S_n^{(h,2)}(312)$. By means of the above proposition, we proved the following one.

\begin{proposition}\label{equiClass}
There exists a bijection between the classes  $\mathcal S_n^{(h,2)}(312)$ and $\mathcal D_n^{(h,2)}$, which is the restriction $\varphi_{\big|D_n^{(h,2)}}$.
\end{proposition}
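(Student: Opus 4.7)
The plan is to derive this statement essentially as a formal corollary of the preceding proposition together with the bijectivity of $\varphi$ on the full class $\mathcal{D}_n$. There is no additional combinatorial content to extract; what remains is simply to package the characterization proved in the previous proposition as a bijection between two sets.

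First I would recall that $\varphi : \mathcal{D}_n \to \mathcal{S}_n(312)$ has already been established as a bijection in Section \ref{Prelim}. Consequently, the restriction $\varphi_{\big|D_n^{(h,2)}}$ is automatically injective, being the restriction of an injection. It therefore suffices to identify the image $\varphi\!\left(\mathcal{D}_n^{(h,2)}\right)$ with $\mathcal{S}_n^{(h,2)}(312)$.

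Next, I would invoke the definition given immediately before the proposition: $\mathcal{S}_n^{(h,2)}(312)$ is introduced as the set of those permutations in $\mathcal{S}_n(312)$ which are the image under $\varphi$ of some path in $\mathcal{D}_n^{(h,2)}$. By construction this set coincides with $\varphi\!\left(\mathcal{D}_n^{(h,2)}\right)$, so the restriction $\varphi_{\big|D_n^{(h,2)}}$ is surjective onto $\mathcal{S}_n^{(h,2)}(312)$ and hence bijective. If one prefers an intrinsic description of the codomain, the preceding proposition supplies one: $\mathcal{S}_n^{(h,2)}(312)$ consists exactly of the permutations $\pi \in \mathcal{S}_n(312)$ for which no left-to-right maximum $\pi_{i_j}$ simultaneously satisfies $\pi_{i_j} - i_j = h-1$ and $\pi_{i_{j+1}} = \pi_{i_j} + 1$. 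One could also note the inclusion $\mathcal{S}_n^{(h,2)}(312) \subseteq \mathcal{S}_n^{(h)}(312)$, which follows from $\mathcal{D}_n^{(h,2)} \subseteq \mathcal{D}_n^{(h)}$ together with the bijection $\varphi_{\big|D_n^{(h)}}$ already recorded in Section \ref{Section4}.

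There is no real obstacle here; the entire difficulty was already overcome in the preceding proposition, which established the two-way correspondence linking the valley-avoidance condition on $P$ to the forbidden left-to-right maximum configuration in $\pi$. The present proposition is a straightforward repackaging of that result, and the proof would amount to just the short paragraph sketched above.
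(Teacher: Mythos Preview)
Your proposal is correct and matches the paper's approach exactly: the paper does not give a separate proof but simply states that ``by means of the above proposition, we proved the following one,'' treating the result as an immediate repackaging once $\mathcal S_n^{(h,2)}(312)$ has been defined as the image of $\mathcal D_n^{(h,2)}$ under $\varphi$. Your observation that injectivity comes for free from the global bijection $\varphi$ and surjectivity from the very definition of $\mathcal S_n^{(h,2)}(312)$ is precisely the content the paper leaves implicit.
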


\noindent From Proposition \ref{equiClass}, a generating algorithm for the class $\mathcal S_n^{(h,2)}(312)$ according to the succession rule $\Omega_h$ can be obtained. A combinatorial interpretation of $\Omega_h$ in terms of permutations is then desired. 

First of all we note that, if $\pi=\pi_1 
\dots \pi_n \in \mathcal S_n^{(h,2)}(312)$, then $\pi_1 \leq h$. 
After that, we have to find an interpretation of the parameters 
comparing in the rule $\Omega_h$. The axiom $(1)$ at level 0 can 
be associated to the \emph{empty} permutation and its production 
labelled with $(2)$ can be associated to the permutation $1$. The 
parameter $(k)$ at level $n$ in the rule $\Omega_h$ admits the 
following interpretation according to the value of $\pi_1$ in 
$\pi \in S_n^{(h,2)}(312)$:

\begin{equation}\label{k_rule}
(k)=
\begin{cases}
\pi_1 + 1  & \text{if } \pi_1 \neq h\ ;\\
\pi_1 - 1  & \text{if } \pi_1 = h .
\end{cases}
\end{equation}

More precisely, if $\pi_1 < h$, a permutation $\pi=\pi_1 \dots \pi_n \in \mathcal S_n^{(h,2)}(312)$ at level $n$, produces $k=\pi_1+1$ sons at level $n+1$ by inserting the element $\ell$, with $\ell=1,2,\ldots, \pi_1 +1$, before $\pi_1$ and rescaling the sequence $\ell\pi$ in order to obtain a permutation $\pi' \in \mathcal S_{n+1}^{(h,2)}(312)$ (for the sake of clearness, each entry $\pi_i$ of $\pi$ equal or greater than $\ell$ is increased by $1$ in order to obtain $\pi'$).

Otherwise, when $\pi_1 = h$, a given permutation $\pi=\pi_1 \dots \pi_n \in \mathcal S_n^{(h,2)}(312)$ at level $n$, produces $k=\pi_1-1=h-1$ sons at level $n+1$ by inserting the element $\ell$, with $\ell=1,2,\ldots, h-1$, before $\pi$. Analogously, $\pi' \in \mathcal S_{n+1}^{(h,2)}(312)$ is obtained by rescaling the sequence $\ell \pi$, for each $\ell$.  

\bigskip
As an example, fixed $h=3$, the succession rule for $\mathcal S_n^{(3,2)}(312)$, or equivalently for $\mathcal D_n^{(3,2)}$, is as follows:
\begin{equation}\label{rule_h_3}
\Omega_3:
\left\{
\begin{array}{ccl}
(1)&&\\
(1)&\leadsto&(2)\\
(2)&\leadsto&(2)(3)\\
(3)&\leadsto&(2)(2)(3)
\end{array}
\right.
\end{equation}

In Figure \ref{fig1} a graphical representation of the first levels of $\Omega_3$ is shown in terms of permutations in $\mathcal S_n^{(3,2)}$.

\begin{figure}[!h]
\centering
\includegraphics[width=1.29\linewidth]{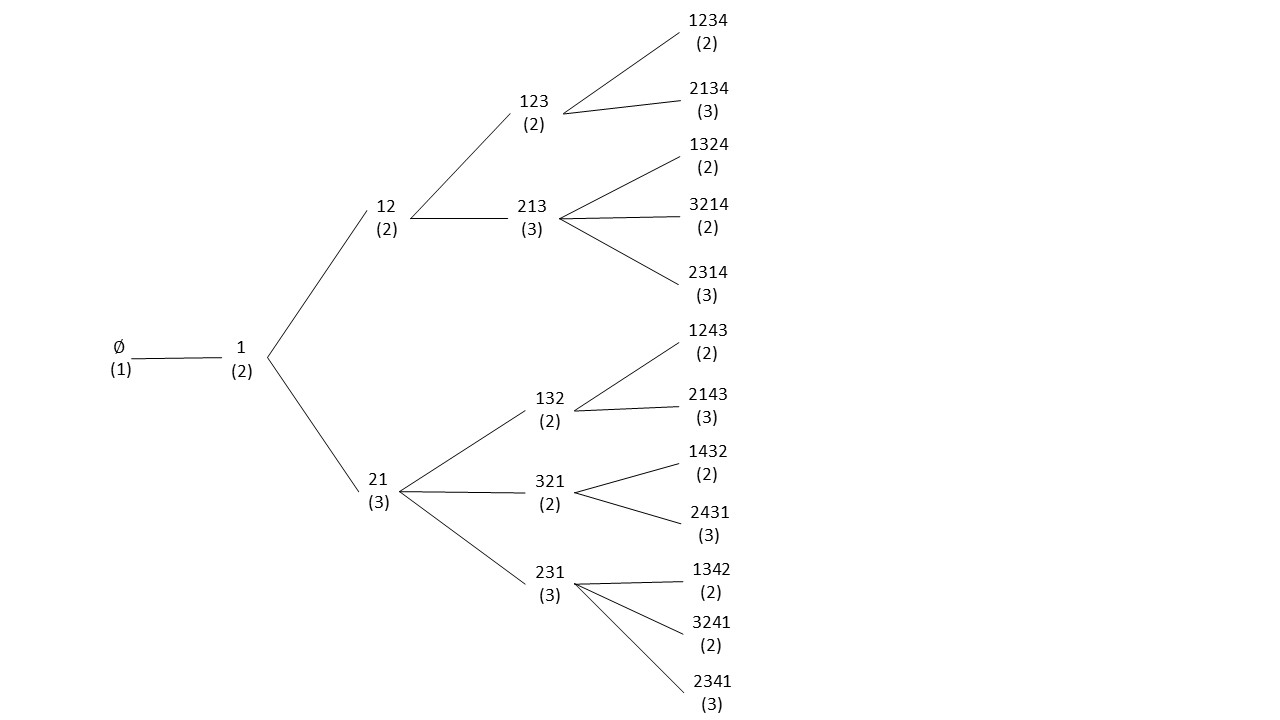}
\caption{Graphical representation of the generating tree associated with $\mathcal S_n^{(3,2)}$ where the label associated to each permutation is shown in brackets.}
\label{fig1}
\end{figure}

\section{Enumeration} \label{enum}

The case $h=2$ is not included in the general formula (\ref{rule_h2}) for the succession rules. However, it is easy to see that in this case it is

\begin{equation}\label{rule_h_2}
\Omega_2:
\left\{
\begin{array}{ccl}
(1)&&\\
(1)&\leadsto&(2)\\
(2)&\leadsto&(1)(2)\\
\end{array}
\right.
\end{equation}

\noindent
The succession rule (\ref{rule_h_2}) defines the Fibonacci numbers.

According to the theory developed by Deutsch et al. \cite{DFR}, the production matrix $P_2$ associated to $\Omega_2$ is

\begin{equation}
P_2=
\left(
\begin{matrix}
0&1\\
1&1\\
\end{matrix}
\right)
\end{equation}

\noindent
and, for $h\geq 3$, 


\begin{equation}
P_h=
\left(
\begin{matrix}
0&u^t\\
0&P_{h-1}+eu^t\\
\end{matrix}
\right)
\end{equation}

\bigskip
\noindent
where $u^t$ is the row vector $(1\ 0\ 0\ldots)$ and $e$ is the column vector $(1\ 1\ 1\ \ldots)^t$, of appropriate size and for what the generating function $f_h(x)$ of the sequence corresponding to $\Omega_h$ is concerned, we have \cite{DFR}, for $h\geq 2$,

\begin{equation}\label{f_h}
f_h(x)=\frac{1}{1-xf_{h-1}(x)}\ .
\end{equation}

When $h=1$, clearly the unique paths in $\mathcal D_n^{(1,2)}$ are the empty path $\varepsilon$ and $UD$, so that the sequence $(D_n^{(1,2)})_{n\geq 0}$ is
$\{1,1,0,0,\ldots\}$, whose generating function is 
 $f_1(x)=1+x$ which is rational. Thanks to (\ref{f_h}) it is possible to deduce that also $f_h(x)$ with $h\geq 2$ is rational, too. Therefore, we can consider its general form as follows:

\begin{equation}\label{f_hpq}
f_h(x)=\frac{p_h(x)}{q_h(x)}\ ,
\end{equation}

\noindent
where $p_h(x)$ and $q_h(x)$ are polynomials with suitable degrees.  From (\ref{f_h}) and (\ref{f_hpq}) we obtain

\begin{equation}\label{p_hq_h}
\begin{aligned}
p_h(x) & = q_{h-1}(x)\\
q_h(x) &= q_{h-1}(x)-xq_{h-2}(x)\ .\\
\end{aligned}
\end{equation}

\bigskip
\noindent
Since the degree of the polynomial $q_h(x)$ is $\left\lceil{\frac{h+1}{2}}\right\rceil$ (it can be easily seen by induction), we can assume

$$
q_h(x)=a_{h,0}-a_{h,1}x-a_{h,2}x^2-\ldots-a_{h,j}x^j
\quad 
\mbox{with} \quad j=\left\lceil{\frac{h+1}{2}}\right\rceil .
$$

\noindent
Clearly, it is $a_{h,j}=0$ if $j>\left\lceil{\frac{h+1}{2}}\right\rceil$.

\bigskip
\noindent
As $a_{1,0}=1$, thanks to (\ref{p_hq_h}) we have $a_{h,0}=a_{h-1}$,
and $$a_{h,0}=1\ \ \mbox{for each}\ \  h\geq 1\ .$$ Moreover,

\begin{equation}\label{q_h_prima}
q_h(x)=1-a_{h,1}x-a_{h,2}x^2-\ldots-a_{h,j}x^j
\quad \mbox{with} \quad j=\left\lceil{\frac{h+1}{2}}\right\rceil .
\end{equation}

\bigskip
\noindent
Using the expression for $q_h(x)$ in (\ref{p_hq_h}), we obtain

\begin{equation}\label{q_h_seconda}
\begin{aligned}
q_h(x)= & \ 1-a_{h-1,1}x-a_{h-1,2}x^2-\ldots-a_{h-1,j-1}x^{j-1}\\
 & -x\left(
 1-a_{h-2,1}x-a_{h-2,2}x^2-\ldots-a_{h-2,j-2}x^{j-2}
 \right) .\\
\end{aligned}
\end{equation}

\bigskip
\noindent
For the identity theorem for polynomials, comparing formulas  (\ref{q_h_prima}) and (\ref{q_h_seconda}) for $q_h(x)$, it is

\begin{equation}\label{a_hj}
a_{h,j}=
\begin{cases}
a_{h-1,1}+1  & \text{for } j=1\\
a_{h-1,j} - a_{h-2,j-1} & \text{for } j=2,3,\ldots,\left\lceil{\frac{h+1}{2}}\right\rceil .
\end{cases}
\end{equation}

In Table \ref{rn} we list the first numbers of the coefficients $a_{h,j}$ for some fixed values of $h\geq 1$ and $j \geq 1$. On the diagonals, it is possible to observe a similarity with the $A112467$ sequence in The On-line Encyclopedia of Integer Sequences \cite{S}.

\begin{table}[h!]
\begin{center}
{\footnotesize
\begin{tabular}{|c|rrrrrrrr|}
\hline
{$h$} / {$j$} & 1 & 2 & 3 & 4 & 5 & 6 & 7 & 8\\
\hline
1  & 0 & 0 & 0 & 0 & 0 & 0 & 0 & 0\\
2  & 1 & 1 & 0 & 0 & 0 & 0 & 0 & 0\\
3  & 2 & 1 & 0 & 0 & 0 & 0 & 0 & 0\\
4  & 3 & 0 & -1 & 0 & 0 & 0 & 0 & 0\\
5  & 4 & -2 & -2 & 0 & 0 & 0 & 0 & 0\\
6  & 5 & -5 & -2 & 1 & 0 & 0 & 0 & 0\\
7  & 6 & -9 & 0 & 3 & 0 & 0 & 0 & 0\\
8  & 7 & -14 & 5 & 5 & -1 & 0 & 0 & 0\\
9  & 8 & -20 & 14 & 5 & -4 & 0 & 0 & 0\\
10  & 9 & -27 & 28 & 0 & -9 & 1 & 0 & 0\\
11  & 10 & -35 & 48 & -14 & -14 & 5 & 0 & 0\\
12  & 11 & -44 & 75 & -42 & -14 & 14 & -1 & 0\\
13  & 12 & -54 & 110 & -90 & 0 & 28 & -6 & 0\\
14  & 13 & -65 & 154 & -165 & 42 & 42 & -20 & 1\\

\hline\end{tabular}
}
\caption{The coefficients $a_{h,j}$ for some fixed values of $h$ and $j$.} \label{rn}
\end{center}
\end{table}

\bigskip
\noindent
We have an explicit formula for the coefficients $a_{h,j}$ thanks to the following proposition.

\begin{proposition}
For $h\geq 2$ and for $j=1,2,\ldots,\left\lceil{\frac{h+1}{2}}\right\rceil$ we have:
\begin{equation}\label{a_hj_binom}
a_{h,j}=\frac{3j-h-2}{j}\binom{h-j+1}{j-1}(-1)^j
\end{equation}
\end{proposition}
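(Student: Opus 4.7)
The plan is to prove the closed form by induction on $h$, driven by the two-term recurrence (\ref{a_hj}). Since the recurrence for $j\geq 2$ involves $a_{h-1,j}$ and $a_{h-2,j-1}$, I would take $h=2$ and $h=3$ as base cases, and verify them directly against Table~\ref{rn}: each involves only $j\in\{1,2\}$ and reduces to arithmetic in (\ref{a_hj_binom}).

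For the inductive step with $j=1$, the recurrence gives $a_{h,1}=a_{h-1,1}+1$, while (\ref{a_hj_binom}) specializes to $a_{h,1}=\frac{1-h}{1}\binom{h}{0}(-1)=h-1$. This case then follows immediately from $a_{2,1}=1$ by a straightforward induction on $h$.

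For $j\geq 2$, I would substitute the inductive hypothesis into $a_{h,j}=a_{h-1,j}-a_{h-2,j-1}$, obtaining
\[
a_{h,j}=(-1)^j\left[\frac{3j-h-1}{j}\binom{h-j}{j-1}+\frac{3j-h-3}{j-1}\binom{h-j}{j-2}\right].
\]
The closed form on the target side can be split via Pascal's identity $\binom{h-j+1}{j-1}=\binom{h-j}{j-1}+\binom{h-j}{j-2}$, yielding two binomial terms with common coefficient $(3j-h-2)/j$. Matching coefficient by coefficient reduces the verification to the single identity
\[
(j-1)\binom{h-j}{j-1}=(h-2j+2)\binom{h-j}{j-2},
\]
which is nothing but $\binom{n}{k}=\tfrac{n-k+1}{k}\binom{n}{k-1}$ applied with $n=h-j$ and $k=j-1$.

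The only subtlety I foresee is bookkeeping at the top of the range: when $j=\lceil(h+1)/2\rceil$ the shifted value $a_{h-1,j}$ may fall formally outside the originally stated range and must be read as $0$. This is harmless because $\binom{h-j}{j-1}$ vanishes precisely when $j>(h+1)/2$, so (\ref{a_hj_binom}) automatically returns $0$ in that regime. Beyond this, I do not expect a genuine obstacle; once the inductive framework is in place, the whole argument collapses to a one-line algebraic check based on Pascal's rule and the standard ratio identity for binomial coefficients.
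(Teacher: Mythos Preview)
Your proposal is correct and follows essentially the same inductive strategy as the paper: induction on $h$ via the recurrence (\ref{a_hj}), treating $j=1$ separately and then handling $j\geq 2$ by substituting the hypothesis and simplifying. The only minor differences are cosmetic: the paper takes just $h=2$ as base case (tacitly relying on the fact that (\ref{a_hj_binom}) also returns the correct value $a_{1,j}=0$ when $h=1$), and it performs the algebraic simplification by expanding the binomials into factorials rather than invoking Pascal's rule and the ratio identity as you do; your treatment of the two base cases and of the boundary $j=\lceil(h+1)/2\rceil$ is slightly more careful than the paper's, but the underlying argument is the same.
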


\begin{proof}\quad We proceed by induction on $h$. For $h=2$, it is $j=1,2$, and expression (\ref{a_hj_binom}) gives $a_{2,1}=1$ and $a_{2,2}=1$, agreeing with the expression for $f_2(x)=\frac{1}{1-x-x^2}$ derived from (\ref{f_h}) and $f_1(x)=1+x$.

For $h>2$, we first analyze the case $j=1$. Using $a_{h,1}=a_{h-1,1}+1$ from (\ref{a_hj}) and the inductive hypothesis, we have

$$
a_{h,1} = \ a_{h-1,1}+1=(2-h)(-1)^1+1=h-1\\
$$

\noindent
which matches the value of $a_{h,1}$ returned by (\ref{a_hj_binom}).

For $j>2$, we use $a_{h,j}=a_{h-1,j}-a_{h-2,j-1}$ from (\ref{a_hj}) and, again, the inductive hypothesis. We get

$$
\begin{aligned}
a_{h,j} &= \ a_{h-1,j}-a_{h-2,j-1}\\
&\\ 
&=\frac{3j-h-1}{j}\binom{h-j}{j-1}(-1)^{j}-\frac{3j-h-3}{j-1}\binom{h-j}{j-2}(-1)^{j-1}\\
&\\
&=\frac{3j-h-1}{j}\binom{h-j}{j-1}(-1)^{j}+\frac{3j-h-3}{j-1}\binom{h-j}{j-2}(-1)^{j}\ .
\end{aligned}
$$

\noindent
Expanding the binomial coefficients and with some manipulations, it is

$$
a_{h,j}=\frac{(-1)^j(h-j+1)!(3j-h-2)}{j(j-1)!(h-2j+2)!}=\frac{3j-h-2}{j}\binom{h-j+1}{j-1}(-1)^j\ ,
$$

\bigskip
\noindent
as required. The proof is completed.

\end{proof}

In the sequel, we are going to evaluate a recurrence relation for the terms $D_n^{(h,2)}$ involving the series expansion at $x=0$ of the generating function
$$f_h(x)=\frac{p_h(x)}{q_h(x)}=\frac{q_{h-1}(x)}{q_h(x)}=\sum_{n\geq 0}D_n^{(h,2)}x^n\ .$$

The expression for $q_h(x)$ becomes
\begin{equation}\label{q_hx_definitiva}
q_h(x)=1-\sum_{j=1}^{\lceil{\frac{h+1}{2}}\rceil}\frac{3j-h-2}{j}\binom{h-j+1}{j-1}(-1)^jx^j
\end{equation}

Thus, we obtain

\begin{equation}\label{gen_fun_pq_definitiva}
f_h(x)=\frac{1-\displaystyle\sum_{j=1}^{\lceil{\frac{h}{2}}\rceil}\frac{3j-h-1}{j}\binom{h-j}{j-1}(-1)^jx^j}{1-\displaystyle\sum_{j=1}^{\lceil{\frac{h+1}{2}}\rceil}\frac{3j-h-2}{j}\binom{h-j+1}{j-1}(-1)^j x^j}\ 
\end{equation}

\bigskip\noindent
and
  
\begin{align*}
 &\left( {1-\displaystyle\sum_{j=1}^{\lceil{\frac{h+1}{2}}\rceil}\frac{3j-h-2}{j}\binom{h-j+1}{j-1}(-1)^j x^j}\right)
  \left(
  \sum_{n\geq 0}D_n^{(h,2)}x^n
  \right)
  =\\
  &=1-\displaystyle\sum_{j=1}^{\lceil{\frac{h}{2}}\rceil}\frac{3j-h-1}{j}\binom{h-j}{j-1}(-1)^jx^j    
\end{align*}

\bigskip\noindent
Sorting the first part according to the increasing powers of $x$ we have

\begin{align*}
	&\sum_{n\geq 0}
	\left(
	D_n^{(h,2)}-
	\sum_{j=1}^{\lceil{\frac{h+1}{2}}\rceil}
	D_{n-j}^{(h,2)}
	\frac{3j-h-2}{j}\binom{h-j+1}{j-1}(-1)^j
	\right)
	x^n
	=\\
	&=1-\displaystyle\sum_{j=1}^{\lceil{\frac{h}{2}}\rceil}\frac{3j-h-1}{j}\binom{h-j}{j-1}(-1)^jx^j
\end{align*}

\bigskip\noindent
where $D_{\ell}^{(h,2)}=0$ whenever $\ell \leq 0$.

\medskip
For the identity theorem for polynomials we can deduce the desired recurrence relation

\scriptsize
\begin{equation*}
D_n^{(h,2)}
=
\begin{cases}
1&\text{for } n=0\ ;\\
&\\
\displaystyle\sum_{j=1}^{\lceil{\frac{h+1}{2}}\rceil}
D_{n-j}^{(h,2)}
\frac{3j-h-2}{j}\binom{h-j+1}{j-1}(-1)^{j}
-\frac{3n-h-1}{n}
\binom{h-n}{n-1}
(-1)^{n}&\text{for } n\geq 1\ .
\end{cases}
\end{equation*}

\normalsize
A very interesting note arises when, once $h$ is fixed, we ask  for the number $D_n^{(h,2)}$ of Dyck paths having semilength $n\leq h$. Clearly, in this case, it is $D_n^{(h,2)}=C_n$ since 
all the Dyck paths of a certain semilegth $n\leq h$ have height at most equal to $n$. Thanks to the above argument it is 
possible to derive interesting relations involving Catalan 
numbers. Indeed, for the above remark, posing $h=n+\alpha,$
we can write
$
D_n^{(n+\alpha,2)}=C_n
$,
where $\alpha \geq 0$ is integer. Then, it is possible to deduce the combinatorial identity involving Catalan numbers as follows:
\footnotesize
\begin{equation}\label{Catalan}
C_n=\displaystyle\sum_{j=1}^{n}
C_{n-j}
\frac{3j-n-\alpha-2}{j}\binom{n+\alpha-j+1}{j-1}(-1)^{j}
-\frac{2n-\alpha-1}{n}
\binom{\alpha}{n-1}
(-1)^{n} \ .
\end{equation}
\normalsize

\section{Further developments}
In this paper we analyzed the case $k=2$ leading to bounded Dyck paths avoiding valleys at given height (i.e., $h-1$) corresponding to the permutations in $\mathcal S_n^{(h,2)}(312)$. An interesting generalization could concern  the cases $k>2$ in order to investigate what are the arising constraints on the subclasses of $312$-avoiding permutations. The number $k-1$ of consecutive valleys allowed at height $h-1$ clearly affects the value and position of the l.r.M., as we have seen in the $k=2$ case. For values of $k$ larger than $2$, the permutations probably have a structure that can be described in terms of a suitable block decomposition.

The above combinatorial identity (\ref{Catalan}) is obtained by means of a purely combinatorial consideration. By virtue of this, similar relations are expected to arise even in cases $k > 2$.
It might then be possible to derive a family of combinatorial identities as $k$ varies. 

Another further line of research could consider the possibility to list the paths of $\mathcal D_n^{(h,2)}$  in a Gray code sense using the tools developed by Barcucci, Bernini et al. \cite{BBP2,BBPSV2,BBPSV,BBPV1,BBPV2}. As mentioned in Section \ref{Prelim}, these paths can be encoded by strings on the alphabet $\{U,D\}$, so the problem of defining a Gray code could be addressed by starting from the techniques developed by Vajnovszki et al. \cite{VW}.

Moreover, the considered Dyck paths could be used for the construction of a strong non-overlapping code proposed by Barcucci et al. \cite{BBP5}.

\end{document}